\documentclass[11pt,a4paper]{article}
\usepackage[margin=28mm]{geometry}
\usepackage{amsmath,amssymb,amsthm}
\usepackage[T1]{fontenc}
\usepackage{lmodern}
\usepackage{cite}
\usepackage{hyperref}
\hypersetup{pdfborder={0 0 0},pdftitle={On a problem of Ma and Chen concerning minimal additive complements},pdfauthor={Shi-Qiang Chen and Quan-Hui Yang}}
\newtheorem{theorem}{Theorem}[section]
\newtheorem{lemma}[theorem]{Lemma}
\theoremstyle{remark}

\title{On a problem of Ma and Chen concerning minimal additive complements\thanks{The first author is supported by the National Natural Science Foundation of China (Grant No. 12301003), the Anhui Provincial Natural Science Foundation (Grant No. 2308085QA02) and the University Natural Science Research Project of Anhui Province (Grant No. 2022AH050171). The second author is supported by the National Natural Science Foundation of China (Grants No. 12371005 and 12671008).}}
\author{Shi-Qiang Chen$^{a}$,\qquad Quan-Hui Yang$^{b,}$\thanks{Corresponding author. E-mail addresses: csq20180327@163.com (S.-Q. Chen), yangquanhui01@163.com (Q.-H. Yang)}}
\date{}
\makeatletter
\renewcommand{\@maketitle}{%
  \begin{center}
    {\Large\@title\par}%
    \ifx\@author\@empty\else\medskip{\normalsize\@author\par}\fi
    \ifx\@date\@empty\else\medskip{\normalsize\@date\par}\fi
  \end{center}}
\makeatother

\begin{document}
\maketitle
\begin{center}
\small
$a$. School of Mathematics and Statistics,\\
Anhui Normal University, Wuhu 241002, P.R. China\\[3pt]
$b$. School of Mathematical Sciences and Ministry of Education Key Laboratory for NSLSCS,\\
Nanjing Normal University, Nanjing 210023, P.R. China
\end{center}

\begin{abstract}
Let $m\geq1$ be an integer and let
$W=(m\mathbb N_0+X)\cup U\cup V\subseteq\mathbb Z$ be bounded below, where
$X\ne\varnothing$, $[U]_m\subseteq[X]_m$, $[X]_m\cap[V]_m=\varnothing$,
and $V$ contains no infinite arithmetic progression with common difference $m$.
Here $[S]_m$ denotes the set of residue classes modulo $m$ represented by $S$.
Suppose that $W$ satisfies the $\mathcal C$-condition modulo $m$.
For $0\leq r<m$, let $V_r=V\cap(r+m\mathbb Z)$, and put
$V'=\bigcup_{|V_r|=\infty}V_r$, where the union is over $0\leq r<m$.
We prove that $W$ has a minimal additive complement if $V'$ satisfies
the big-hole property modulo $m$. This answers a problem posed by
Ma and Chen.
\end{abstract}

\noindent\textit{Keywords:} minimal additive complements; sumsets; arithmetic progressions.

\medskip
\noindent\textit{2020 Mathematics Subject Classification:} 05D15, 11B13.

\section{Introduction}
Let $\mathbb Z$ be the set of integers and put
\[
 \mathbb N_0=\{0,1,2,\ldots\},\qquad
 H+W=\{h+w:h\in H,\ w\in W\}.
\]
A set $H\subseteq\mathbb Z$ is an additive complement to a nonempty
set $W\subseteq\mathbb Z$ if
\begin{equation*}
 H+W=\mathbb Z.
\end{equation*}
It is a minimal additive complement if
\begin{equation*}
 H+W=\mathbb Z,\qquad
 (H\setminus\{h\})+W\ne\mathbb Z\quad(h\in H).
\end{equation*}

In 2011, Nathanson~\cite{N11} asked whether every infinite set of
integers has a minimal additive complement. In 2012, Chen and
Yang~\cite{CY12} proved that such a complement exists in each of the
following cases:
\[
 \text{(i)}\quad \inf W=-\infty\quad\text{and}\quad \sup W=+\infty;
\]
\[
 \text{(ii)}\quad W=\{w_1<w_2<\cdots\},\qquad
 \limsup_{j\to\infty}(w_{j+1}-w_j)=\infty.
\]

To state the subsequent results, let $m\geq1$ be an integer. For
$a\in\mathbb Z$ and $S\subseteq\mathbb Z$, put
\[
 [m]=\{0,1,\ldots,m-1\},\qquad
 [a]_m\in[m],\quad a\equiv[a]_m\pmod m,\qquad
 [S]_m=\{[s]_m:s\in S\}.
\]
Suppose that $W$ is bounded below and
\begin{equation}\label{eq3}
 \begin{gathered}
 W=(m\mathbb N_0+X)\cup U\cup V,\\
 [U]_m\subseteq[X]_m,\qquad [X]_m\cap[V]_m=\varnothing,
 \end{gathered}
\end{equation}
where $V$ contains no infinite arithmetic progression with common
difference $m$. We say that $W$ satisfies the $\mathcal C$-condition
modulo $m$ if $X\ne\varnothing$ and there exists
$\mathcal C\subseteq[m]$ with
\begin{equation}\label{eq4}
 [\mathcal C+(X\cup V)]_m=[m]
\end{equation}
such that, for each $c\in\mathcal C$, some $v_c\in V$ satisfies
\begin{equation}\label{eq5}
 [c+v_c]_m\notin
 [(\mathcal C\setminus\{c\})+(X\cup V)]_m.
\end{equation}

In 2019, Kiss, S\'andor and Yang~\cite{KSY19} proved the following
result.\\
{\bf Theorem A.} Let $W$ satisfy \eqref{eq3}--\eqref{eq5}.
If $0<|V|<\infty$, then there exists a minimal additive complement to $W$.

In the same paper, Kiss, S\'andor and Yang~\cite{KSY19} asked whether
the finiteness assumption on $V$ in Theorem A can be removed.
Also in 2019, Kwon~\cite{K19} obtained the following result.\\
{\bf Theorem B.} Let $W$ satisfy \eqref{eq3}--\eqref{eq5}.
If $|[V]_m|=1$ and $V$ has a minimal additive complement,
then there exists a minimal additive complement to $W$.

For related results on minimal additive complements, see
\cite{CF20,BSA20,AKL21,BSG21,BSC21,BSN21,BL23,Z23,GT25,CD26,CT26,GT26,HT26,P26,TH26}.

In 2026, Ma and Chen~\cite{MC} considered the following condition.

Let $Y=\{y_1<y_2<\cdots\}\subseteq\mathbb Z$ be bounded below, and put
\[
 Y_r=Y\cap(r+m\mathbb Z)\qquad(r\in[m]).
\]
We say that $Y$ has the big-hole property modulo $m$ if
\begin{equation}\label{eq6}
 |Y_r|=\infty\qquad(r\in[Y]_m)
\end{equation}
and
\begin{equation}\label{eq7}
 \begin{aligned}
 \limsup_{\substack{j\to\infty\\y_j\in Y_r}}
 (y_j-y_{j-1})&=\infty,\\
 \limsup_{\substack{j\to\infty\\y_j\in Y_r}}
 (y_{j+1}-y_j)&=\infty
 \end{aligned}
 \qquad(r\in[Y]_m).
\end{equation}
Here $y_{j-1},y_j,y_{j+1}$ are consecutive members of $Y$;
$y_{j-1}$ and $y_{j+1}$ need not belong to $Y_r$.
For $Y=\varnothing$, \eqref{eq6}--\eqref{eq7} are understood to
hold vacuously.

Ma and Chen~\cite{MC} proved the following result.\\
{\bf Theorem C.} Let $W$ satisfy \eqref{eq3}--\eqref{eq5}.
If $V$ satisfies the big-hole property modulo $m$, then
there exists a minimal additive complement to $W$.

For $r\in[m]$, put
\begin{equation*}
 V_r=V\cap(r+m\mathbb Z),\qquad
 V'=\bigcup_{\substack{r\in[m]\\|V_r|=\infty}}V_r.
\end{equation*}
In the same paper, Ma and Chen~\cite[Problem~1.2]{MC} posed the following problem.\\
{\bf Ma and Chen's Problem.} Let $W$ satisfy \eqref{eq3}--\eqref{eq5}.
If $V'$ satisfies the big-hole property modulo $m$,
does there exist a minimal additive complement to $W$?

In 2026, Tang and He~\cite{TH26} gave an affirmative answer to Problem~1.3 of
Ma and Chen, which concerns not eventually periodic $S$-difference sets.
Here we consider Problem~1.2.

The following theorem gives an affirmative answer to the problem of
Ma and Chen.

\begin{theorem}\label{thm1}
Let $W$ satisfy \eqref{eq3}--\eqref{eq5}, and define $V'$ as above. If $V'$ satisfies the big-hole property modulo $m$, then
there exists a minimal additive complement to $W$.
\end{theorem}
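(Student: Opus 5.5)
The plan is to reduce to the two mechanisms already behind Theorems A and C: the finite classes of $V$ are handled as in Theorem A, and the infinite ones as in Theorem C. Put $V''=V\setminus V'$, which is finite because only finitely many residue classes occur. If $V'=\varnothing$, then $V=V''$ is finite. It is nonempty because \eqref{eq5} requires some $v_c\in V$ (and $\mathcal C\neq\varnothing$ by \eqref{eq4}). So Theorem A applies directly, and from now on we assume $V'\neq\varnothing$. The complement will have the form $H=\bigcup_{c\in\mathcal C}H_c$ with $H_c\subseteq c+m\mathbb Z$, and the construction goes in two stages: first build an additive complement of this shape, then make it minimal.

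\emph{Step 1: a complement.} For each $c\in\mathcal C$ we choose $H_c$ unbounded below. Then every class $[c+x]_m$ with $x\in X$ is entirely covered, since $m\mathbb N_0+X$ contains a full upward tail of each such class. The classes that are covered only through $c+V$ for suitable $c$ need an inductive, greedy construction in the style of Chen--Yang and Ma--Chen. We enumerate the uncovered integers and, for each one, add an element of the appropriate $H_c$ that covers it. These elements are taken very far below all earlier ones, so that the gaps in each $H_c$ eventually exceed $\operatorname{diam}(V'')$ together with any prescribed bound.

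\emph{Step 2: minimality.} For each $h\in H_c$ we exhibit a witness $n_h=h+v$, with $v$ in the class $[v_c]_m$, that is represented only by $h$. The choice of $v_c$ in \eqref{eq5} guarantees that $n_h$ is not in $(\mathcal C\setminus\{c\})+(X\cup V)$ modulo $m$. It is also not in $H_c+X$, because $[X]_m\cap[V]_m=\varnothing$ and $[U]_m\subseteq[X]_m$. So the only possible rival representations are $n_h=h'+v'$ with $h'\in H_c\setminus\{h\}$ and $v'\in V$, $v'\equiv v_c$. We split into two cases according to the class $r=[v_c]_m$.
\begin{itemize}
\item If $|V_r|<\infty$, then $V_r\subseteq V''$ and we take $v=v_c$. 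Since consecutive elements of $H_c$ differ by more than $\operatorname{diam}(V'')$, the equation $h-h'=v'-v_c$ has no solution with $h'\neq h$. This is exactly the Theorem~A mechanism.
\item If $|V_r|=\infty$, then $V_r\subseteq V'$ and no element of $V''$ lies in class $r$. Here we use the big-hole property \eqref{eq7} of $V'$ in class $r$. We choose $v=y_j\in V_r$ such that the gaps $y_j-y_{j-1}$ and $y_{j+1}-y_j$ are both larger than the spread of the finitely many elements of $H_c$ chosen so far. The element $h$ is then placed so that later elements of $H_c$, lying much further down, cannot reach $n_h$ through elements of $V_r$ below $y_j$. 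This is the Ma--Chen argument with $V'$ in place of $V$.
\end{itemize}

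The main obstacle is interleaving the two requirements in a single induction. Making the gaps of $H_c$ large (which is needed for the $V''$-witnesses and for the big-hole witnesses) must not destroy the covering of the classes reached only through $c+V$. The reason is that $V'$ may be very thin, so covering a class requires many elements of $H_c$. We would therefore first fix a finite set of ``covering'' elements at each stage. Each such element receives its own witness only after the finite set is determined, by pushing its witness $y_j$ far up, beyond all elements so far, inside a big hole of $V'$. For classes $r$ of $v_c$ with $V_r$ finite, the witness is instead protected by spacing all elements of $H_c$ more than $\operatorname{diam}(V'')$ apart. Checking that these two requirements never conflict is where the detailed bookkeeping lies.
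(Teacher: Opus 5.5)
\textbf{There is a genuine gap.} Your analysis of the possible rival representations in Step~2 is correct. The problem is that the two mechanisms you propose for protecting witnesses break down under the constraints coming from the finite part $V''$, and this is where the real difficulty lies.

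An integer $n$ with $[n]_m\notin[\mathcal C+X]_m$ and $(n-V')\cap(\mathcal C+m\mathbb Z)=\varnothing$ can only be covered by an element of the finite set $(n-V'')\cap(\mathcal C+m\mathbb Z)$. That element lies within bounded distance of $n$, so it cannot be taken ``very far below all earlier ones'', and such constraints can force $H_c$ to be dense.

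Concretely, let $m=3$, $X=\{0\}$, $U=\varnothing$, $\mathcal C=\{0\}$ and $V=\{1,7\}\cup V_2$. Here $V_2\subseteq 2+3\mathbb N_0$ is a union of blocks of $k$ consecutive terms of $2+3\mathbb Z$ ($k=1,2,\dots$), separated by gaps tending to infinity. All hypotheses hold, any $v\in V$ may serve as $v_c$, and $V'=V_2$.
\begin{itemize}
\item For $H\subseteq 3\mathbb Z$, the class $1+3\mathbb Z$ is reached only through $H+\{1,7\}$. Hence $3k\in H$ or $3k-6\in H$ for every $k$.
\item It follows that every $h\in H$ has $h-3\in H$ or $h+3\in H$; otherwise $h+4\notin H+W$. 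So your Case~1 requirement, that consecutive elements of $H_c$ differ by more than $\operatorname{diam}(V'')=6$, fails at every $h$, not just eventually. It also follows that consecutive elements of $H$ differ by at most $12$.
\item Taking $v_c\in V_2$ does not help. For $h\in H$ and $y\in V_2$, the set $h+y-(H\cap(-\infty,h))$ is an unbounded subset of $2+3\mathbb Z$ with gaps at most $12$. It therefore meets every sufficiently distant block of at least five terms, so $h+y$ has a second representation. No element of $H$ has a witness in the class $2+3\mathbb Z$, and your Case~2 cannot be carried out.
\end{itemize}
Nevertheless $H=\{0,3\}+12\mathbb Z$ is a minimal complement. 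Each $n\equiv1\pmod 3$ has exactly one representation $h+f$ with $h\in H$, $f\in\{1,7\}$. This is exact combinatorics rather than spacing, and the ``bookkeeping'' you defer is precisely the missing argument.

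\textbf{How the paper supplies this argument.} Put $B=\mathcal C+m\mathbb Z$ and $F=V\setminus V'$. Let $A_0$ be the integers lying outside $[\mathcal C+X]_m+m\mathbb Z$ and outside $B+V'$.
\begin{itemize}
\item The paper takes the inclusion-minimal sets among $E_n=(n-F)\cap B$ ($n\in A_0$). These are finite, and each point lies in at most $|F|$ of them.
\item Lemma~\ref{lem1} produces $H_0$ inside their union $B_f$ that meets every such set and every infinite set $(n-V')\cap B_f$. Moreover, each $h\in H_0$ is the only element of $H_0$ in some $E_{n_h}$, so $n_h$ is its witness.
\item The remaining integers lie in $B_i+V'$, where $B_i=B\setminus B_f$ consists of residue classes untouched by finite constraints. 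Only these are handled by the big-hole greedy construction of Lemma~\ref{lem2}. There each new element is protected against earlier witnesses by a hole of $V'$.
\item The witnesses need not lie in the class $[c+v_c]_m$ at all. Condition \eqref{eq5} is used only to show $\inf H_c=-\infty$, so that $H+(m\mathbb N_0+X)$ covers $[\mathcal C+X]_m+m\mathbb Z$.
\end{itemize}
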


\section{Preliminary Lemmas}

\begin{lemma}\label{lem1}
Let $B$ be a countable set, and let $\mathcal K$ be a family of
nonempty finite subsets of $B$. Suppose that
\begin{equation*}
 \bigl|\{K\in\mathcal K:b\in K\}\bigr|<\infty
 \qquad(b\in B)
\end{equation*}
and
\begin{equation*}
 K\subseteq K'\quad\text{only if}\quad K=K'
 \qquad(K,K'\in\mathcal K).
\end{equation*}
Put
\[
 B_0=\bigcup_{K\in\mathcal K}K.
\]
For every countable family $\mathcal E$ of infinite subsets of $B_0$,
there exists $H\subseteq B_0$ such that
\begin{equation*}
 \begin{aligned}
 H\cap K&\ne\varnothing &&(K\in\mathcal K),\\
 H\cap E&\ne\varnothing &&(E\in\mathcal E),
 \end{aligned}
\end{equation*}
and, for every $h\in H$, there exists $K_h\in\mathcal K$ with $H\cap K_h=\{h\}$.
\end{lemma}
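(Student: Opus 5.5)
The plan is to first choose, one at a time, a point of each $E\in\mathcal E$ together with a prescribed private witness, keeping these witnesses far apart. Then we extend this set of points by a Zorn's lemma minimality argument, forbidding the other points of the chosen witnesses. Enumerate $\mathcal E=\{E_1,E_2,\dots\}$; if $\mathcal E$ is finite the construction simply stops early. For $K\in\mathcal K$ write $N(K)=\{K'\in\mathcal K:K'\cap K\ne\varnothing\}$. Since $K$ is finite and every point lies in only finitely many members of $\mathcal K$, each $N(K)$ is finite, and so is the second neighbourhood $N^2(K)=\bigcup_{K'\in N(K)}N(K')$.

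\emph{Step 1 (choosing $e_n$ and $K_n$).} Suppose $e_1,\dots,e_{n-1}$ and $K_1,\dots,K_{n-1}$ have been chosen. The set $S_n=\bigcup_{i<n}\bigcup_{K\in N^2(K_i)}K$ is finite, and $E_n$ is infinite, so we may pick $e_n\in E_n\setminus S_n$. Since $e_n\in B_0$, some $K_n\in\mathcal K$ contains $e_n$; fix one. If $K_n$ belonged to $N^2(K_i)$ for some $i<n$, then $e_n\in K_n\subseteq S_n$, which is impossible. Hence $K_n\notin N^2(K_i)$, so $K_n$ meets no member of $N(K_i)$; in particular $K_n\cap K_i=\varnothing$.

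This spacing gives the key separation property: every $K\in\mathcal K$ meets at most one $K_i$. Indeed, if $K$ met $K_i$ and $K_j$ with $i<j$, then $K\in N(K_i)$ and $K_j$ meets $K$, contradicting the choice of $K_j$. Now put
\[
A=\{e_1,e_2,\dots\},\qquad F=\bigcup_i\bigl(K_i\setminus\{e_i\}\bigr).
\]
The $K_i$ are pairwise disjoint, so $A\cap F=\varnothing$. Moreover, no $K\in\mathcal K$ with $K\cap A=\varnothing$ satisfies $K\subseteq F$. Such a $K$ is nonempty and meets some $K_i$; by the separation property it meets no other $K_j$, so $K\subseteq K_i\setminus\{e_i\}\subsetneq K_i$. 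This contradicts the antichain hypothesis on $\mathcal K$.

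\emph{Step 2 (minimal extension).} Let $\mathcal H$ be the family of sets $H$ with $A\subseteq H\subseteq B_0\setminus F$ and $H\cap K\ne\varnothing$ for all $K\in\mathcal K$. By Step 1, $B_0\setminus F$ meets every $K$: either $K$ meets $A$, or $K\not\subseteq F$. So $B_0\setminus F\in\mathcal H$ and $\mathcal H$ is nonempty. Order $\mathcal H$ by reverse inclusion. For a chain in $\mathcal H$, the intersection still lies in $\mathcal H$. The point is that, for fixed $K$, the sets $H\cap K$ form a chain of nonempty subsets of the finite set $K$, which has a least element, and that element is nonempty. Zorn's lemma therefore yields an inclusion-minimal $H\in\mathcal H$.

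\emph{Step 3 (verification).} $H$ meets every $K\in\mathcal K$ by construction, and meets every $E_n$ because $e_n\in A\subseteq H$. Let $h\in H$. If $h=e_i\in A$, then $H\cap K_i=\{e_i\}$, since $K_i\setminus\{e_i\}\subseteq F$ and $H\cap F=\varnothing$; take $K_h=K_i$. If $h\notin A$, then $H\setminus\{h\}$ still satisfies $A\subseteq H\setminus\{h\}\subseteq B_0\setminus F$, so by minimality it is not in $\mathcal H$. Hence some $K_h\in\mathcal K$ has $(H\setminus\{h\})\cap K_h=\varnothing$. Since $H\cap K_h\ne\varnothing$, this forces $H\cap K_h=\{h\}$.

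I expect the main obstacle to be Step 1. The naive choice of $e_n$ and $K_n$ can produce forbidden sets $K_i\setminus\{e_i\}$ that together cover some $K$ not hit by $A$, and then no extension works. Choosing $e_n$ outside the second neighbourhoods $S_n$ is exactly what makes each $K$ meet at most one $K_i$, so that the antichain hypothesis can be applied.
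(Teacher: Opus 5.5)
Your proof is correct and follows the paper's argument. The paper likewise picks $v_i\in E_i\setminus N(N(A_{i-1}))$ with $A_{i-1}=K_1\cup\dots\cup K_{i-1}$, which is exactly your second-neighbourhood set $S_i$, deduces that each $K$ meets at most one $K_i$, and uses the antichain hypothesis to show that $H^*=B_0\setminus\bigcup_i(K_i\setminus\{v_i\})$ meets every $K$. The only difference is the final thinning step: the paper enumerates the countable set $H^*$ and greedily deletes points whose removal keeps every $K\in\mathcal K$ hit (finiteness of each $K$ playing the same role as in your chain argument), whereas your Zorn's lemma version does not need $B$ to be countable.
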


\begin{proof}
If $\mathcal K=\varnothing$, then
\[
 B_0=\varnothing,\qquad \mathcal E=\varnothing,\qquad
 H=\varnothing
\]
satisfies the conclusions of the lemma.
Suppose $\mathcal K\ne\varnothing$.
For every finite set $A\subseteq B_0$, put
\[
 N(A)=\bigcup_{\substack{K\in\mathcal K\\K\cap A\ne\varnothing}}K.
\]
By the assumption that each $b\in B$ lies in only finitely many members of $\mathcal K$,
\begin{equation*}
 \begin{aligned}
 \bigl|\{K\in\mathcal K:K\cap A\ne\varnothing\}\bigr|
 &\leq\sum_{b\in A}|\{K\in\mathcal K:b\in K\}|<\infty,\\
 |N(A)|&<\infty,\qquad |N(N(A))|<\infty
 \end{aligned}
 \qquad(|A|<\infty).
\end{equation*}
Enumerate $\mathcal E$ as $E_1,E_2,\ldots$, allowing a finite
or empty list, and put $A_0=\varnothing$.
For each listed index $i$, suppose that $A_{i-1}$ has been defined
and is finite. Then
\[
 |N(N(A_{i-1}))|<\infty,\qquad |E_i|=\infty.
\]
Choose
\[
 v_i\in E_i\setminus N(N(A_{i-1})).
\]
Since $v_i\in E_i\subseteq B_0$, choose $K_i\in\mathcal K$ with
$v_i\in K_i$, and put
\[
 A_i=A_{i-1}\cup K_i=\bigcup_{j\leq i}K_j.
\]
Since $K_i$ is finite, $A_i$ is finite.
Fix $j<i$ and $K\in\mathcal K$. If $K\cap K_j\ne\varnothing$,
then $K\cap A_{i-1}\ne\varnothing$, $K\subseteq N(A_{i-1})$.

If also $K\cap K_i\ne\varnothing$, then
\[
 \begin{aligned}
 K_i\cap N(A_{i-1})\ne\varnothing,~
 K_i\subseteq N(N(A_{i-1})),~
 v_i\in N(N(A_{i-1})),
 \end{aligned}
\]
contrary to the choice of $v_i$. Therefore
\begin{equation*}
 \bigl|\{i:K\cap K_i\ne\varnothing\}\bigr|\leq1
 \quad(K\in\mathcal K),\qquad
 K_i\cap K_j=\varnothing\quad(i\ne j).
\end{equation*}
Put
\begin{equation*}
 Q=\bigcup_i(K_i\setminus\{v_i\}),\qquad H^*=B_0\setminus Q.
\end{equation*}
Suppose that $K\in\mathcal K$ and $K\cap H^*=\varnothing$.
By the definitions of $Q$ and $H^*$,
\[
 \varnothing\ne K\subseteq Q
 =\bigcup_i(K_i\setminus\{v_i\}).
\]
Using the fact that each $K\in\mathcal K$ meets at most one $K_i$, there is an index $i$ for which
\[
 K\cap K_i\ne\varnothing,\qquad
 K\cap K_j=\varnothing\quad(j\ne i),
\]
and hence
\[
 K\subseteq K_i\setminus\{v_i\}\subsetneq K_i,
\]
contrary to the noncontainment assumption on $\mathcal K$. Thus
\begin{equation*}
 H^*\cap K\ne\varnothing\quad(K\in\mathcal K),\qquad
 H^*\cap K_i=\{v_i\}\quad(i\text{ listed}).
\end{equation*}

Enumerate $H^*=\{b_1,b_2,\ldots\}$, allowing a finite or empty
list, and set $D_0=H^*$. For every listed $j$, put
\begin{equation*}
 D_j=\begin{cases}
 D_{j-1}\setminus\{b_j\},&
 \displaystyle
 (D_{j-1}\setminus\{b_j\})\cap K\ne\varnothing
       \quad(K\in\mathcal K),\\[1mm]
 D_{j-1},&\text{otherwise}.
 \end{cases}
\end{equation*}
In each case,
\begin{equation*}
 D_j\subseteq D_{j-1},\qquad
 D_j\cap K\ne\varnothing\quad(K\in\mathcal K).
\end{equation*}
Define
\[
 H=\bigcap_{j\geq0}D_j,
\]
where $D_j$ is continued constantly after the last index when the
list is finite. For each finite $K\in\mathcal K$,
\[
 K\cap H=\varnothing
\]
would imply
\[
 K\cap D_j=\varnothing
 \quad\text{for some }j\geq0,
\]
contrary to the nonempty intersections $D_j\cap K$. Hence
\begin{equation*}
 H\cap K\ne\varnothing\qquad(K\in\mathcal K).
\end{equation*}
Fix $h\in H$, and choose a listed index $j$ such that $h=b_j$.
Since
\[
 b_j\in H\subseteq D_j,
\]
we have $D_j=D_{j-1}$. Otherwise,
\[
 D_j=D_{j-1}\setminus\{b_j\},
\]
which implies $b_j\notin H$, a contradiction.
Hence there exists $K\in\mathcal K$ such that
\[
 (D_{j-1}\setminus\{h\})\cap K=\varnothing,\qquad
 D_{j-1}\cap K=\{h\}.
\]
Using the inclusions $H\subseteq D_{j-1}$ and the nonempty intersections $H\cap K$,
\[
 \{h\}\subseteq H\cap K\subseteq D_{j-1}\cap K=\{h\},
\qquad H\cap K=\{h\}.
\]
For every listed $i$, the equality $H^*\cap K_i=\{v_i\}$ and the nonempty intersections $H\cap K$ give
\[
 \varnothing\ne H\cap K_i
 \subseteq H^*\cap K_i=\{v_i\},
\]
and hence
\[
 v_i\in H\cap K_i,\qquad
 v_i\in E_i,\qquad H\cap E_i\ne\varnothing.
\]
This proves the lemma.
\end{proof}

\begin{lemma}\label{lem2}
Let $m\geq1$ and let $Y\subseteq\mathbb Z$ be bounded below and
satisfy \eqref{eq6}--\eqref{eq7} modulo $m$. Let
$F\subseteq\mathbb Z$ be finite. Put
\[
 V=Y\cup F,\qquad B=D+m\mathbb Z
 \quad(D\subseteq[m]).
\]
For every $T\subseteq B+Y$, there exists $H\subseteq B$ such that
\begin{equation*}
 T\subseteq H+V,\qquad
 T\not\subseteq(H\setminus\{h\})+V\quad(h\in H).
\end{equation*}
\end{lemma}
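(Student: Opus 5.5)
Apply Lemma~\ref{lem1} with ground set $B$ (countable) and the family
\[
 \mathcal K_0=\{K_t:t\in T\},\qquad K_t=\{b\in B:t\in b+V\}=B\cap(t-V).
\]
Then $H+V\supseteq T$ means exactly $H\cap K_t\ne\varnothing$ for all $t\in T$. Minimality means: for each $h\in H$ some $t$ has $H\cap K_t=\{h\}$.

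Each $K_t$ is nonempty, since $t\in B+Y\subseteq B+V$. It is finite, because $V$ is bounded below and $B\cap(t-V)$ is bounded above. But $t-V$ is infinite, so finiteness must come from another source. The remedy is to take $\mathcal K$ to be the \emph{minimal} elements of a family of finite sets, which needs a finite-support reformulation. I would therefore restrict to finite ``witness'' sets.

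\textbf{Step 1.} Replace $T$ by a suitable subfamily. The plan is to take $\mathcal K$ to be the inclusion-minimal members among the sets $K_t$ that are finite. For the $t$ with $K_t$ infinite, put $E_t=K_t$ and use the family $\mathcal E=\{E_t\}$ in Lemma~\ref{lem1}. The lemma requires $E\subseteq B_0$, where $B_0$ is the union of $\mathcal K$, so $\mathcal K$ must be large enough to cover. For this, enlarge $T$ artificially, or use the big-hole property.

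\textbf{Step 2, the key use of \eqref{eq7}.} This produces finite constraints. Fix a residue $r\in[Y]_m$. A big hole to the left and to the right of $y_j\in Y_r$ means that $y_j$ is isolated in $Y$ by gaps of length at least $L$. Beyond $\max F-\min F$, this also isolates it in $V$. For $b\in B$ with $[b+r]_m\in$ a suitable residue class, consider integers $t=b+y_j$. Any other representation $t=b'+v$ with $b'\in B$, $v\in V$ forces $v\neq y_j$. Using only that $V$ is bounded below and $t$ is large, the set $K_t$ is contained in $\{b'\le t-\min V\}$. This is still infinite in the downward direction, so we want $B$ restricted from below.

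I would handle this by first noting that $B=D+m\mathbb Z$ is unbounded below. To treat it, construct $H$ inductively in blocks. Process $t\in T$ in increasing order of $t$. For each $t$ not yet covered, choose $h$ with $h+y_j=t'$ for a big-hole element $y_j$. Here $t'$ is a new ``private'' integer whose only representation from the current $H$ is via $h$. Concretely:

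\textbf{Step 3.} Enumerate $T=\{t_1,t_2,\dots\}$ (arbitrary order). At stage $i$, if $t_i\notin H_{i-1}+V$, pick $y\in Y$ with $t_i-y\in B$ and add $h_i=t_i-y$. The condition $t\in B+Y$ guarantees that such a $y$ exists. If the residue class $Y_r$ used has big holes, then $t_i-y$ ranges over infinitely many choices. So we may choose $h_i$ very negative, with $y$ having huge gaps on both sides. The two-sided big-hole property then gives a private point $t_i^*=h_i+y$ near which no other chosen $h$ covers: earlier $h$'s are finitely many and bounded, and later ones are chosen far away, even further out. One must ensure later additions never cover the private points. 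This holds by choosing each new $y$ so that, for every previous private point $p$, $p-h_{\rm new}\notin V$. Only finitely many conditions arise, each excluding finitely many choices near the bottom of $V$ thanks to the holes.

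The main obstacle is ensuring that $t_i^*\in T$. The private witness must lie in $T$, not just in $\mathbb Z$. I expect to need $t_i^*=t_i$ itself. So instead choose $y$ so that $t_i$ is covered only by $h_i$: we need $t_i-v\notin H$ for all other $v\in V$, for the current and all future $H$. For current $H$ this is automatic, since $t_i$ is uncovered. For the future, require each later $h$ to avoid $t_k-V$ for all $k<i$ with recorded private points. These sets are infinite, so we must choose the later $y$ inside big holes, where $t_k-V$ is sparse relative to $t_i-Y_r$. This is exactly where both limsups in \eqref{eq7} are used.
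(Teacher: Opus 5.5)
\textbf{There is a genuine gap in the privacy step.} Your final strategy in Step~3 is the same skeleton as the paper's proof: cover greedily, and keep each newly covered $t_i$ as a private witness for $h_i=t_i-y$. But the step that preserves privacy is not justified, and as stated it fails.

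At stage $i$ you need $y\in Y_r$ with $y+(p-t_i)\notin V$ for every earlier witness $p$. With an arbitrary enumeration the offsets $p-t_i$ can have both signs, so $y$ must be isolated on both sides. You invoke a ``two-sided big-hole property'', but \eqref{eq7} gives only infinitely many $y_j\in Y_r$ with a large gap on the left, and infinitely many, possibly different, ones with a large gap on the right.

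Here is a counterexample. Let $m=1$, $F=\varnothing$, $B=\mathbb Z$, and $Y=\bigcup_{k\ge1}\{4^k,4^k+1\}$. This $Y$ satisfies \eqref{eq6}--\eqref{eq7}, yet every element has a neighbour at distance $1$, so the choice you describe is impossible. If you settle for one-sided gaps, the process can get stuck. Take $T=\{0,1,2\}$ enumerated as $0,2,1$, and choose $y=4^k+1$ for $t_1=0$ and $y=4^j$ for $t_2=2$. This respects privacy at stages $1$ and $2$ and leaves $1$ uncovered. At stage $3$ you would need $y\in Y$ with $y-1\notin Y$ and $y+1\notin Y$, and no such $y$ exists.

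There are two smaller errors:
\begin{itemize}
\item The condition $p-h_{\mathrm{new}}\notin V$ excludes $y$ from a translate of $V$, which is infinite. It does not exclude only ``finitely many choices near the bottom of $V$''.
\item The Lemma~\ref{lem1} route is void here. For $t\in B+Y$ the set $B\cap(t-V)$ contains $t-Y_r$ and is infinite, so your $\mathcal K$ would be empty.
\end{itemize}

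\textbf{The missing idea is the order of processing.} The paper always takes as the next witness an uncovered $n\in T$ of minimal $|n|$. Every earlier witness then satisfies $|n_i|\le|n|$ and $n_i\ne n$. So the offsets $n_i-n$ lie in $[-2n,0)$ if $n>0$ and in $(0,-2n]$ if $n<0$; they have a single sign. Hence a one-sided gap suffices:
\begin{itemize}
\item For $n>0$, pick $y_u\in Y_r$ with $y_u-y_{u-1}>2n$ and $y_{u-1}>M>\max(F\cup\{0\})$. Then $n_i-h_{k+1}\in(y_{u-1},y_u)$ misses $V$.
\item For $n<0$, use a right gap $y_{u+1}-y_u>-2n$.
\end{itemize}
So each limsup in \eqref{eq7} is used, but in different cases and never at the same element. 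The same ordering forces $|n_j|\to\infty$, which is what shows that every $t\in T$ is eventually covered. Your earlier suggestion of processing in increasing order of $t$ would also give one-signed offsets. It is unavailable, however, because $T$ is in general unbounded below.
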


\begin{proof}
For $T=\varnothing$, take $H=\varnothing$. Suppose $T\ne\varnothing$.
Choose
\begin{equation*}
 M>\max(F\cup\{0\}).
\end{equation*}
We construct distinct $n_1,n_2,\ldots\in T$ and
$h_1,h_2,\ldots\in B$, with
\begin{equation*}
 0>h_1>h_2>\cdots,
\end{equation*}
such that
\begin{equation}\label{eq24}
 n_j-h_i\in V\quad\text{if and only if}\quad i=j
 \qquad(i,j\text{ already chosen}).
\end{equation}
At the $j$-th step choose $n_j$ of minimum $|n_j|$ in
\begin{equation}\label{eq25}
 T\setminus\bigl(\{h_1,\ldots,h_{j-1}\}+V\bigr),
\end{equation}
provided this set is nonempty; otherwise stop.
In the first step,
\[
 n_1\in T\subseteq B+Y,
\]
so there are $r\in[Y]_m$ and $d\in D$ with
\[
 [n_1-r]_m=d.
\]
By \eqref{eq6}, choose $y\in Y_r$ with
\[
 y>n_1,\qquad h_1=n_1-y<0.
\]
Then
\[
 [h_1]_m=[n_1-r]_m=d\in D,\qquad
 h_1\in B,\qquad n_1-h_1=y\in V.
\]
Thus the required conditions hold for the first step.

Suppose $n_i,h_i$ have been selected for $1\leq i\leq k$.
If \eqref{eq25} is nonempty for $j=k+1$, put $n=n_{k+1}$.
For $1\leq i\leq k$, the set from which $n$ is chosen is a subset
of the set from which $n_i$ was chosen. Moreover,
$n_i\in h_i+V$ and $n\notin\{h_1,\ldots,h_k\}+V$.
By the minimum rule, we have
\begin{equation*}
 |n_i|\leq|n|,\qquad n_i\ne n
 \qquad(1\leq i\leq k).
\end{equation*}
Moreover,
\begin{equation*}
 n-h_i\notin V\qquad(1\leq i\leq k).
\end{equation*}
If $0\in T$, the minimum rule \eqref{eq25} gives
\[
 |n_1|=\min\{|t|:t\in T\}=0,\qquad n_1=0.
\]
Since the $n_i$ are distinct,
\[
 n_{k+1}\ne0\qquad(k\geq1).
\]
The same conclusion holds if $0\notin T$.
Since $n\in B+Y$, choose $r\in[Y]_m$ satisfying
\begin{equation*}
 [n-r]_m\in D.
\end{equation*}

\medskip
\noindent\textbf{Case 1.} \textit{$n>0$.}
By \eqref{eq7}, choose $y_u\in Y_r$ so that
\begin{equation*}
 y_u>n-h_k,\qquad y_{u-1}>M,\qquad
 y_u-y_{u-1}>2n.
\end{equation*}
Put $h_{k+1}=n-y_u$. Then the inequalities $0>h_1>\cdots>h_k$ and the choices of $r$ and $y_u$
give
\[
 [h_{k+1}]_m=[n-r]_m\in D,\qquad
 h_{k+1}\in B,\qquad
 h_{k+1}<h_k<0,
\]
and
\[
 n-h_{k+1}=y_u\in Y_r\subseteq V.
\]
From the minimum rule \eqref{eq25},
\[
 -n\leq n_i\leq n,\qquad n_i\ne n,
\]
whence
\[
 -2n\leq n_i-n<0\qquad(1\leq i\leq k).
\]
Therefore
\begin{equation*}
 \begin{aligned}
 M <y_{u-1}<y_u-2n \leq y_u+n_i-n=n_i-h_{k+1}<y_u
 \qquad(1\leq i\leq k).
 \end{aligned}
\end{equation*}
Since $y_{u-1},y_u$ are consecutive elements of $Y$ and
$F\subseteq(-\infty,M)$,
\[
 (y_{u-1},y_u)\cap Y=\varnothing,\qquad
 (M,\infty)\cap F=\varnothing.
\]
Together with the bounds for $n_i-h_{k+1}$,
\begin{equation*}
 n_i-h_{k+1}\notin Y\cup F=V
 \qquad(1\leq i\leq k).
\end{equation*}

\medskip
\noindent\textbf{Case 2.} \textit{$n<0$.}
By \eqref{eq7}, choose $y_u\in Y_r$ with
\begin{equation*}
 y_u>\max\{M,n-h_k\},\qquad
 y_{u+1}-y_u>-2n.
\end{equation*}
Put $h_{k+1}=n-y_u$. Then
\[
 [h_{k+1}]_m=[n-r]_m\in D,\qquad
 h_{k+1}\in B,\qquad
 h_{k+1}<h_k<0,
\]
and
\[
 n-h_{k+1}=y_u\in Y_r\subseteq V.
\]
From the minimum rule \eqref{eq25},
\[
 n\leq n_i\leq-n,\qquad n_i\ne n,
\]
whence
\[
 0<n_i-n\leq-2n\qquad(1\leq i\leq k).
\]
Consequently,
\begin{equation*}
 \begin{aligned}
 M<y_u<y_u+n_i-n=n_i-h_{k+1} \leq y_u-2n<y_{u+1}
 \qquad(1\leq i\leq k).
 \end{aligned}
\end{equation*}
Since $y_u,y_{u+1}$ are consecutive elements of $Y$,
\[
 (y_u,y_{u+1})\cap Y=\varnothing,\qquad
 (M,\infty)\cap F=\varnothing.
\]
Using the bounds for $n_i-h_{k+1}$,
\begin{equation*}
 n_i-h_{k+1}\notin Y\cup F=V
 \qquad(1\leq i\leq k).
\end{equation*}

In both cases, the preceding exclusions
yield
\[
 \begin{cases}
 n-h_{k+1}\in V,\\
 n-h_i\notin V &(1\leq i\leq k),\\
 n_i-h_{k+1}\notin V &(1\leq i\leq k),
 \end{cases}
\]
and hence \eqref{eq24} holds for all selected indices.

Let $H=\{h_j:j\text{ is selected}\}$. If the construction stops,
\eqref{eq25} gives
\[
 T\setminus(H+V)=\varnothing.
\]
If the construction is infinite, the minimum rule \eqref{eq25} and the distinctness
of the $n_j$ give
\[
 |n_1|\leq|n_2|\leq\cdots,
 \qquad \bigl|\{n_j:|n_j|\leq R\}\bigr|\leq2R+1
 \quad(R\in\mathbb N_0),
\]
and therefore
\[
 \lim_{j\to\infty}|n_j|=\infty.
\] For
$t\in T\setminus(H+V)$, we would have
\[
 t\in T\setminus\bigl(\{h_1,\ldots,h_{j-1}\}+V\bigr)
 \quad(j\geq1),
\]
and the minimum rule \eqref{eq25} would give
\[
 |n_j|\leq|t|\quad(j\geq1),
\]
contrary to $|n_j|\to\infty$. Thus in either case,
\[
 T\subseteq H+V.
\]
Finally, for each selected $j$, \eqref{eq24} gives
\[
 \begin{aligned}
 (n_j-V)\cap H=\{h_j\}, ~
 n_j\in h_j+V\subseteq H+V, ~
 n_j\notin(H\setminus\{h_j\})+V,
 \end{aligned}
\]
and hence
\[
 T\not\subseteq(H\setminus\{h_j\})+V
 \qquad(h_j\in H).
\]
This proves the lemma.
\end{proof}

\section{Proof of Theorem~\ref{thm1}}
Put
\begin{equation*}
 \begin{gathered}
 Y=V',\qquad F=V\setminus Y,\qquad B=\mathcal C+m\mathbb Z,\\
 C_1=[\mathcal C+X]_m,\qquad
 A=([m]\setminus C_1)+m\mathbb Z.
 \end{gathered}
\end{equation*}
By the definition of $V'$,
\[
 F=\bigcup_{\substack{r\in[m]\\|V_r|<\infty}}V_r.
\]
Since $|[m]|=m$,
\begin{equation*}
 |F|\leq\sum_{\substack{r\in[m]\\|V_r|<\infty}}|V_r|<\infty,
 \qquad V=Y\cup F,\qquad Y\cap F=\varnothing.
\end{equation*}
For $a\in A$, \eqref{eq4} and the definitions of $B$, $C_1$, and $A$ give
\[
 \begin{aligned}
 \relax[a]_m&\in[m]\setminus C_1,\\
 [a]_m&\in[\mathcal C+(X\cup V)]_m,\\
 [a]_m&\in[\mathcal C+V]_m.
 \end{aligned}
\]
Choose $c\in\mathcal C$, $v\in V$ with
\[
 [a]_m=[c+v]_m,\qquad a-v\in c+m\mathbb Z\subseteq B.
\]
Thus $a=(a-v)+v\in B+V$. Moreover, by \eqref{eq3},
\[
 \begin{aligned}
 W\setminus V&\subseteq(m\mathbb N_0+X)\cup U,\\
 [W\setminus V]_m&\subseteq[X]_m,\\
 [B+(W\setminus V)]_m&\subseteq[\mathcal C+X]_m=C_1.
 \end{aligned}
\]
Consequently,
\begin{equation}\label{eq37}
 \begin{aligned}
 A&\subseteq B+V,~~
 B+(W\setminus V) \subseteq C_1+m\mathbb Z,~~
 A\cap(C_1+m\mathbb Z) =\varnothing.
 \end{aligned}
\end{equation}

Put
\begin{equation*}
 A_0=A\setminus(B+Y).
\end{equation*}
By the definitions of $B$, $C_1$, and $A$ and $B+m=B$,
\[
 (B+Y)+m=B+Y,\qquad A+m=A.
\]
Hence
\begin{equation*}
 A_0+m=A_0,\qquad B+m=B,\qquad A+m=A.
\end{equation*}
For $n\in A_0$, define
\begin{equation*}
 E_n=(n-V)\cap B=(n-F)\cap B.
\end{equation*}
The second equality holds since
\[
 n\notin B+Y,\qquad (n-Y)\cap B=\varnothing.
\]
For $n\in A_0$, \eqref{eq37} and the definition of $A_0$ give
\[
 \varnothing\ne(n-V)\cap B.
\]
Together with the finiteness of $F$ and the translation identities for $A_0$, $B$, and $A$,
\[
 \begin{aligned}
 E_n&=(n-V)\cap B=(n-F)\cap B,\\
 0&<|E_n|\leq|F|,\\
 E_{n+m}&=((n+m)-F)\cap B=E_n+m.
 \end{aligned}
\]
For $b\in B$,
\[
 \{n\in A_0:b\in E_n\}\subseteq(b+F)\cap A_0,
\]
so that
\begin{equation*}
 \begin{aligned}
 &0<|E_n|\leq|F|\qquad(n\in A_0),\\
 &E_{n+m}=E_n+m\qquad(n\in A_0),\\
 &\bigl|\{n\in A_0:b\in E_n\}\bigr|\leq|F|
       \qquad(b\in B).
 \end{aligned}
\end{equation*}

Define the family of inclusion-minimal sets
\begin{equation*}
 \mathcal K=\bigl\{E_n:n\in A_0,\
 \text{there is no }n'\in A_0\text{ with }E_{n'}\subsetneq E_n\bigr\}.
\end{equation*}
For each $n\in A_0$, the nonempty family $\{E_{n'}:n'\in A_0,\ E_{n'}\subseteq E_n\}$
contains a member of minimum cardinality. Thus, for each $n\in A_0$, there exists $K\in\mathcal K$ such that $K\subseteq E_n$.

By the translation identities for $A_0$, $B$, and $A$ and the properties of $E_n$ just established,
\[
 E_{n+m}=E_n+m,\qquad
 E_{n-m}=E_n-m\qquad(n\in A_0).
\]
Thus the definition of $\mathcal K$ gives
\[
 K\in\mathcal K\quad\text{if and only if}\quad
 K+m\in\mathcal K.
\]
For each $K\in\mathcal K$ with $b\in K$, select one
$n_K\in A_0$ such that $K=E_{n_K}$. Then
\[
 n_K\in b+F,\qquad
 K\ne K'\quad\text{only if}\quad n_K\ne n_{K'}.
\]
Since $F$ is finite, the properties of $E_n$ and the definition of $\mathcal K$ give
\begin{equation*}
 \begin{gathered}
 K,K'\in\mathcal K,\quad K\subseteq K'
       \quad\text{only if}\quad K=K',\\
 |\{K\in\mathcal K:b\in K\}|\leq|F|\quad(b\in B),\\
 \{K+m:K\in\mathcal K\}=\mathcal K.
 \end{gathered}
\end{equation*}
Put
\begin{equation*}
 B_f=\bigcup_{K\in\mathcal K}K,\qquad B_i=B\setminus B_f.
\end{equation*}
Thus
\begin{equation*}
 B_f+m=B_f,\qquad B_i+m=B_i,\qquad B=B_f\cup B_i,
 \qquad B_f\cap B_i=\varnothing.
\end{equation*}
Fix $n\in A$ with $(n-Y)\cap B_f\ne\varnothing$.
Choose
\[
 b=n-y\in B_f,\qquad y\in Y_r\quad(r=[y]_m).
\]
For every $y'\in Y_r$,
\[
 \begin{aligned}
 y'&\equiv y\pmod m,\\
 n-y'&\equiv n-y=b\pmod m,\\
 n-y'&\in b+m\mathbb Z\subseteq B_f
 \end{aligned}
\]
by the definitions and translation invariance of $B_f$ and $B_i$. Hence
\[
 n-Y_r\subseteq(n-Y)\cap B_f,\qquad
 |n-Y_r|=|Y_r|=\infty
\]
by \eqref{eq6}, giving
\begin{equation*}
 |(n-Y)\cap B_f|=\infty
 \quad\text{whenever }(n-Y)\cap B_f\ne\varnothing.
\end{equation*}

Put
\[
 \mathcal E=\{(n-Y)\cap B_f:n\in A,
 \ (n-Y)\cap B_f\ne\varnothing\}.
\]
Since $A,B_f\subseteq\mathbb Z$ and each member of
$\mathcal E$ is determined by some $n\in A$, the sets $A$,
$B_f$ and $\mathcal E$ are all at most countable.
By the properties of $\mathcal K$ and the infinitude of the nonempty sets $(n-Y)\cap B_f$, the assumptions of
Lemma~\ref{lem1} hold. Thus there exists $H_0\subseteq B_f$ with
\begin{equation*}
 H_0\cap K\ne\varnothing\quad(K\in\mathcal K),\qquad
 H_0\cap((n-Y)\cap B_f)\ne\varnothing
 \quad\text{if }(n-Y)\cap B_f\ne\varnothing,
\end{equation*}
and, for each $h\in H_0$, there exists $K\in\mathcal K$ with
$H_0\cap K=\{h\}$.

For $\mathcal K=\varnothing$, we have
\[
 B_f=\varnothing,\qquad\mathcal E=\varnothing,
 \qquad H_0=\varnothing,
\]
and these intersection properties hold vacuously.

For $n\in A_0$, choose $K\in\mathcal K$ with
$K\subseteq E_n$ by the choice of $\mathcal K$. Then the intersection property of $H_0$ gives
\[
 \varnothing\ne H_0\cap K\subseteq H_0\cap E_n,
 \qquad n\in H_0+V.
\]
For $n\in A\cap(B_f+Y)$, the intersection property of $H_0$ gives
\[
 \varnothing\ne H_0\cap((n-Y)\cap B_f)
 \subseteq H_0\cap(n-Y),
 \qquad n\in H_0+Y\subseteq H_0+V.
\]
Therefore
\begin{equation*}
 A_0\cup\bigl(A\cap(B_f+Y)\bigr)\subseteq H_0+V.
\end{equation*}

Fix $h\in H_0$. By the definition of $\mathcal K$ and the single-point intersection property of $H_0$, there is
$n_h\in A_0$ such that
\[
 K=E_{n_h},\qquad H_0\cap E_{n_h}=\{h\}.
\]
Since $n_h\in A_0$ and $E_{n_h}\subseteq B_f$,
\begin{equation*}
 \begin{aligned}
 (n_h-V)\cap H_0&=E_{n_h}\cap H_0=\{h\},\\
 (n_h-V)\cap B_i&=E_{n_h}\cap B_i=\varnothing.
 \end{aligned}
\end{equation*}

Next let
\begin{equation*}
 T=A\setminus(H_0+V).
\end{equation*}
By the definition of $A_0$ and the inclusion just proved,
\[
 \begin{aligned}
 T \subseteq A\setminus\bigl(A_0\cup(B_f+Y)\bigr) \subseteq(B+Y)\setminus(B_f+Y).
 \end{aligned}
\]
For $n\in(B+Y)\setminus(B_f+Y)$, choose
\[
 n=b+y,\qquad b\in B,\quad y\in Y.
\]
Then $b\notin B_f$ and $b\in B_i$ by the definitions and translation invariance of $B_f$ and $B_i$. Thus
\begin{equation*}
 T\subseteq(B+Y)\setminus(B_f+Y)\subseteq B_i+Y.
\end{equation*}
By the definitions and translation invariance of $B_f$ and $B_i$, put
\[
 D_i=[B_i]_m\subseteq[m],\qquad B_i=D_i+m\mathbb Z.
\]
Applying Lemma~\ref{lem2} to $Y,F,B_i$, and $T$, we obtain
$H_1\subseteq B_i$ such that
\begin{equation}\label{eq54}
 T\subseteq H_1+V,\qquad
 T\not\subseteq(H_1\setminus\{h\})+V\quad(h\in H_1).
\end{equation}
Put
\begin{equation*}
 H=H_0\cup H_1\subseteq B.
\end{equation*}
From the definition of $T$ and \eqref{eq54},
\begin{equation*}
 A\subseteq H+V.
\end{equation*}
For $h\in H_0$, the identities for $(n_h-V)\cap H_0$ and $(n_h-V)\cap B_i$ and $H_1\subseteq B_i$ give
\[
 \begin{aligned}
 (n_h-V)\cap H
 &=\bigl((n_h-V)\cap H_0\bigr)
 \cup\bigl((n_h-V)\cap H_1\bigr) =\{h\}\cup\varnothing=\{h\}.
 \end{aligned}
\]
Consequently,
\begin{equation*}
 (n_h-V)\cap H=\{h\},\qquad
 n_h\notin(H\setminus\{h\})+V.
\end{equation*}
For $h\in H_1$, \eqref{eq54} gives an integer $n_h$ with
\[
 \begin{aligned}
 n_h \in T\setminus\bigl((H_1\setminus\{h\})+V\bigr),~
 n_h \in H_1+V,~
 n_h \in h+V.
 \end{aligned}
\]
By the definition of $T$,
\[
 n_h\notin H_0+V,\qquad
 n_h\notin(H_1\setminus\{h\})+V,
\]
and therefore
\begin{equation*}
 n_h\notin\bigl(H_0\cup(H_1\setminus\{h\})\bigr)+V
 =(H\setminus\{h\})+V.
\end{equation*}
Thus the preceding inclusions and exclusions show that, for each $h\in H$,
there exists $n_h\in A$ such that
\begin{equation}\label{eq59}
 A\subseteq H+V,\qquad
 n_h\notin(H\setminus\{h\})+V.
\end{equation}

For $c\in\mathcal C$, choose $v_c\in V$ satisfying \eqref{eq5},
and put
\begin{equation*}
 a_c=[c+v_c]_m,\qquad H_c=H\cap(c+m\mathbb Z).
\end{equation*}
Suppose $a_c\in C_1$. Then for some $c'\in\mathcal C$, $x\in X$,
\[
 a_c=[c+v_c]_m=[c'+x]_m.
\]
If $c'\ne c$, then
\[
 [c+v_c]_m\in
 [(\mathcal C\setminus\{c\})+X]_m
 \subseteq[(\mathcal C\setminus\{c\})+(X\cup V)]_m,
\]
contrary to \eqref{eq5}. If $c'=c$, then
\[
 [v_c]_m=[x]_m\in[X]_m\cap[V]_m,
\]
contrary to \eqref{eq3}. Hence
\begin{equation*}
 a_c\notin C_1,\qquad a_c+m\mathbb Z\subseteq A.
\end{equation*}
Let $z\in a_c+m\mathbb Z$. By \eqref{eq59}, $z=h+v\quad\text{for some }h\in H,\ v\in V$.

Since $H\subseteq\mathcal C+m\mathbb Z$, there exists
$c'\in\mathcal C$ such that
\[
 [h]_m=c',\qquad
 [c+v_c]_m=a_c=[h+v]_m=[c'+v]_m.
\]
If $c'\ne c$, \eqref{eq5} would be contradicted. Thus
$c'=c$, $h\in H_c$, and therefore
\begin{equation*}
 a_c+m\mathbb Z\subseteq H_c+V
 \qquad(c\in\mathcal C).
\end{equation*}
By \eqref{eq3}, $V\subseteq W$ is bounded below. Choose
$L\in\mathbb Z$ such that $ v\geq L~(v\in V)$.

For each $k\geq1$, the inclusion $a_c+m\mathbb Z\subseteq H_c+V$ gives $h_k\in H_c$ and
$v_k\in V$ with $a_c-km=h_k+v_k$ and
$$h_k=a_c-km-v_k \leq a_c-km-L\rightarrow-\infty
 \qquad(k\to\infty).$$
Hence $\inf H_c=-\infty~(c\in\mathcal C)$.

Let $z\in C_1+m\mathbb Z$. There are $c\in\mathcal C$, $x\in X$
with $z\equiv c+x\pmod m$. By the equality $\inf H_c=-\infty$, choose $h\in H_c$ such that $h\leq z-x$.
Consequently,
\begin{equation*}
 \frac{z-h-x}{m}\in\mathbb N_0,\qquad
 z-h\in x+m\mathbb N_0\subseteq W.
\end{equation*}
It follows that $C_1+m\mathbb Z\subseteq H+W$.

By \eqref{eq59}, the inclusion $C_1+m\mathbb Z\subseteq H+W$, and the definitions of $B$, $C_1$, and $A$,
\[
 \begin{aligned}
 \mathbb Z
 =A\cup(C_1+m\mathbb Z)
 \subseteq(H+V)\cup(H+W)
  \subseteq H+W\subseteq\mathbb Z.
 \end{aligned}
\]
Hence $H+W=\mathbb Z$.

Fix $h\in H$ and choose $n_h\in A$ as in \eqref{eq59}.
By \eqref{eq37},
$$(H\setminus\{h\})+(W\setminus V)
  \subseteq B+(W\setminus V)
  \subseteq C_1+m\mathbb Z,
 n_h \in A,$$
 $$A\cap(C_1+m\mathbb Z)=\varnothing.$$
Together with \eqref{eq59},
\begin{equation*}
 \begin{aligned}
 n_h \notin(H\setminus\{h\})+V, ~
 n_h \notin(H\setminus\{h\})+(W\setminus V), ~
 n_h \notin(H\setminus\{h\})+W.
 \end{aligned}
\end{equation*}
Therefore, the preceding conclusions give
\[
 H+W=\mathbb Z,\qquad
 (H\setminus\{h\})+W\ne\mathbb Z\quad(h\in H).
\]

Therefore, there exists a minimal additive complement $H$ to $W$.

\end{document}